\newtheorem{lemma}{\textbf{Lemma}}[section]
\newtheorem{theorem}{\textbf{Theorem}}[section]
\newenvironment{definition}{\textbf{Definition.}}{\hfill$\blacktriangleleft$}
\newcounter{nremark}
\newtheorem{remark}{\refstepcounter{nremark}\textbf{Remark}}{}
\numberwithin{equation}{section}
\author{Doratossadat Dastgheib\footnote{d\_dastgheib@sbu.ac.ir} , Hadi Farahani\footnote{h$_-$farahani@sbu.ac.ir}}
\title{A proof for completeness of \L ukasiewicz logic}
\begin{document}
	\maketitle
	\begin{abstract}
		 In this paper we give a new proof for the completeness of infinite valued propositional \L ukasiewicz logic introduced by \L ukasiewicz and Tarski in 1930. Our approach employs a Hilbert-style proof that relies on the concept of maximal consistent extensions, and unlike classical logic, in this context, the maximal extensions are not required to include all formulas or their negations.
		 To illustrate this point, we provide examples of such formulas.  
	\end{abstract}
	
	
	\section{Introduction}
	
\L ukasiewicz logic is one of non-classical logical fuzzy logics that allows to have more than two truth values \cite{Klement2000, Urquhart2001}.
The first proof of the completeness of Łukasiewicz logic was established by Rose and Rosser in \cite{Rose-Rosser1958} using a syntactic approach. Later, in \cite{change1959}, Chang presented an alternative proof based on algebraic methods. His approach demonstrated that any MV-algebra could be decomposed into a product of linearly ordered MV-algebras, which are simpler to work with. By utilizing results from the theory of ordered abelian groups, Chang showed that any formula valid in all linearly ordered MV-algebras could be derived from the axioms and rules of \L ukasiewicz logic. Additionally, in \cite{Botur2015}, another proof was proposed, demonstrating how to embed any finite partial subalgebra of a linearly ordered MV-algebra into $\mathbb{Q}\cap [0,1]$.
\\
In this paper, we present a simple Hilbert-style proof for the completeness of Łukasiewicz logic.

	\section{\L{ukasiewicz Logic}} \label{general_belief}
	In this section, we first remind the language of  \L ukasiewicz propositional logic. Then we review its semantics and then the axioms and also theorems. At end we provide the new proof of completeness.
	\subsection{Syntax and Semantics}
	\begin{definition}
		The \textit{language of propositional \L ukasiewicz logic} $\textgoth{L}$, is defined using the following BNF:
		$$\varphi::= p \;|\; \neg \varphi\;|\; \varphi \, \& \, \varphi \;|\; \varphi \rightarrow \varphi $$
		where $p\in\mathcal{P}$ and $\mathcal{P}$ is an enumerable  set of propositions containing $\perp$. The other connectives $\veebar$, $\wedge$ and $\vee$ are defined as in conventional format:
		$$\begin{array}{ll}
			\varphi \veebar \psi = \neg \varphi \rightarrow \psi, &
			\varphi \wedge \psi = \varphi \,\&\, (\varphi \rightarrow \psi),\\
			\multicolumn{2}{l}{\varphi \vee \psi = ((\varphi \rightarrow \psi) \rightarrow \psi ) \wedge ((\psi \rightarrow \varphi)\rightarrow \varphi).}
		\end{array}$$
	In this paper we also use the symbol $\leftrightarrow$  which is equivalence connective with similar definition as Definition 3.1.7 in \cite{Hajek1998}.
	\end{definition}

	In the following, we define the semantics using valuation function.\\
	\begin{definition}
		A 
		\textit{valuation}  is a function $\pi: \mathcal{P} \rightarrow [0,1]$ that assigns a  value ranges in the interval [0,1] to each proposition.
		Let $\pi$ be a valuation function. 
		The valuation function can be extended to all  formulas in \textgoth{L} recursively as follows: 
		\begin{align*}
			& V(p) = \pi(p) \;\; \forall p\in \mathcal{P}, & \\
			& V(\neg \varphi) = 1- V(\varphi), & \\
			& V(\varphi \,\&\, \psi) = \max\{0, V(\varphi)+V(\psi) - 1\}, & \\
			& V(\varphi \rightarrow \psi) = \min \{1, 1- V(\varphi)+V(\psi)\}.&
		\end{align*}
We say that a formula $\varphi$ is true and denote it by $\vDash \varphi$, if  for all valuations $V$ we have  $V(\varphi)=1$.
	\end{definition}

	\subsection{Axiomatic system}
	In the following, axiom scheme $(A1)$ to $(A7)$ are the axioms of Basic many-valued Logic (\textbf{BL}) (See \cite{Hajek1998} for more details).
	\begin{equation*}
		\begin{array}{ll}
			(A1) (\varphi \rightarrow \psi) \rightarrow ((\psi \rightarrow \chi) \rightarrow (\varphi \rightarrow \chi)) & (A5) (\varphi \rightarrow (\psi \rightarrow \chi)) \leftrightarrow ((\varphi \, \& \, \psi) \rightarrow \chi) \\
			(A2) (\varphi \, \& \, \psi)\rightarrow \varphi & (A6) ((\varphi \rightarrow \psi)\rightarrow \chi) \rightarrow (((\psi \rightarrow \varphi)\rightarrow \chi)\rightarrow \chi)\\
			(A3) (\varphi \, \& \, \psi) \rightarrow (\psi \, \& \, \varphi) & (A7) \perp \rightarrow \varphi \\ 
			(A4) (\varphi \, \& \, (\varphi \rightarrow \psi)) \rightarrow (\psi \, \& \, (\psi \rightarrow \varphi)) &\\
		\end{array}
	\end{equation*} 
		Propositional \textbf{BL} equipped with the double negation axiom $\neg \neg \varphi \rightarrow \varphi$ is known as propositional \L ukasiewicz logic \textbf{\L}. Traditionally, the following axioms are known as the \L ukasiewicz axioms. 
	\begin{itemize}
		\item[(\L{}1)] $\varphi \rightarrow (\psi \rightarrow \varphi)$
		\item[(\L{}2)] $(\varphi \rightarrow \psi) \rightarrow ((\psi \rightarrow \chi) \rightarrow (\varphi \rightarrow \chi))$
		\item[(\L{}3)] $(\neg \varphi \rightarrow \neg \psi) \rightarrow (\psi \rightarrow \varphi)$
		\item[(\L{}4)] $((\varphi \rightarrow \psi) \rightarrow \psi) \rightarrow ((\psi \rightarrow \varphi) \rightarrow \varphi)$
	\end{itemize}
		 In the following, we give some properties of propositional fuzzy \L ukasiewicz logic which we will use in our proofs. We refer to properties of (\L12), (\L13), and (\L14) as a \textit{replacement} in this paper (See \cite{Hajek1998} for more details).
	$$\begin{array}{llcll}
		(\text{\L}5) & \neg (\varphi \,\&\, \psi) \leftrightarrow (\neg \varphi \veebar \neg \psi) & \quad &
		(\text{\L}6)& \neg (\varphi \veebar \psi) \leftrightarrow (\neg \varphi \, \& \, \neg \psi) \\		
		(\text{\L{}}7)& (\varphi \veebar \psi) \leftrightarrow (\neg \varphi \rightarrow \psi) & \quad&(\text{\L8}) & \neg \neg \varphi \leftrightarrow \varphi \\
		(\L9) &(\varphi \rightarrow \perp) \leftrightarrow \neg \varphi & & (\text{\L{}}10)& (\varphi \,\&\, (\varphi \rightarrow \psi)) \rightarrow \psi \\
(\text{\L{}}11) & 	\multicolumn{4}{l}{((\varphi_1 \rightarrow \psi_1) \,\&\, (\varphi_2 \rightarrow \psi_2))\rightarrow((\varphi_1 \,\&\, \varphi_2)\rightarrow (\psi_1\,\&\, \psi_2))}\\
(\L12 )
& \varphi \rightarrow\varphi \veebar \psi & & \\
		(\text{\L{}}13)& (\varphi \leftrightarrow  \psi) \rightarrow ((\varphi \rightarrow \chi) \leftrightarrow (\psi \rightarrow \chi)) &\quad & &\\
		
		(\text{\L{}}14) &(\varphi \leftrightarrow  \psi) \rightarrow ((\chi \rightarrow \varphi) \leftrightarrow  (\chi \rightarrow \psi))& \quad & & \\
		(\text{\L{}}15) & (\varphi \leftrightarrow  \psi) \rightarrow ((\varphi \,\&\, \chi) \leftrightarrow  (\psi \,\&\, \chi)) & &\\
	\end{array}$$
	\subsection{Completeness} \label{gb_sound_complete}

	\begin{definition}
		 We say that a finite set $\{\varphi_1, \cdots, \varphi_n\}$ is consistent if $\nvdash \neg (\varphi_1\,\&\, \cdots\,\&\, \varphi_n)$. If $\Gamma$ is an infinite set of formulae and all its finite subsets are consistent, then $\Gamma$ is called consistent.
		\\		
		An infinite consistent set of $\Phi$ of formulae in \textgoth{L} is called \textit{maximal} if for all formulae  $\psi\notin\Phi$, the set $\Phi \cup \{\psi\}$  is not consistent.
	
	\end{definition}
	\begin{remark}\label{lemma_consistent}
		Let  $\Phi$ be a set of formulae in \textgoth{L} that is inconsistent. For each set $\Psi$ which $\Phi \subseteq \Psi$  we have $\Psi$ is inconsistent.
	\end{remark}
	\begin{lemma}\label{and_Luka}
		Let $\Gamma = \{\varphi_1, \cdots, \varphi_n\}$ be a set of formulae, then $\Gamma \vdash \varphi_1\,\&\,\cdots\,\&\,\varphi_n$. 
	\end{lemma}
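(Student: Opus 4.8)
The natural route is induction on $n$, with the real content packaged into a single derived theorem that fuses two independently available formulas into their strong conjunction. The base case $n=1$ is immediate: since $\varphi_1\in\Gamma$, we have $\Gamma\vdash\varphi_1$ in one step. The heart of the argument is the auxiliary theorem
$$\vdash \varphi \rightarrow (\psi \rightarrow (\varphi \,\&\, \psi)).$$
To produce it I would instantiate (A5) at $\chi := \varphi \,\&\, \psi$, which gives
$$(\varphi \rightarrow (\psi \rightarrow (\varphi \,\&\, \psi))) \leftrightarrow ((\varphi \,\&\, \psi) \rightarrow (\varphi \,\&\, \psi)).$$
The right-hand side is an instance of the identity schema $\chi\rightarrow\chi$, a standard theorem of the system (obtainable from (\L1) and (\L2) by two applications of modus ponens). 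I would then peel the right-to-left implication out of the biconditional (A5)—using (A2), (A3) and modus ponens to select the relevant conjunct of the definition of $\leftrightarrow$ as a strong conjunction of the two implications—and discharge its antecedent against the identity by modus ponens, yielding the fusion theorem.

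For the inductive step, assume $\{\varphi_1,\dots,\varphi_{n-1}\}\vdash \varphi_1\,\&\,\cdots\,\&\,\varphi_{n-1}$. Instantiating the fusion theorem with $\varphi := \varphi_1\,\&\,\cdots\,\&\,\varphi_{n-1}$ and $\psi := \varphi_n$, one application of modus ponens against the induction hypothesis and a second against the premise $\varphi_n\in\Gamma$ deliver $\varphi_1\,\&\,\cdots\,\&\,\varphi_n$, closing the induction. Throughout, the derivation from the set $\Gamma$ is understood in the usual Hilbert sense: each $\varphi_i\in\Gamma$ may be used as an initial line, and the only rule is modus ponens.

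I expect the main obstacle to be establishing the fusion theorem cleanly, i.e.\ getting the right direction of (A5) in hand and correctly extracting one side of a biconditional, which is exactly the place where the conjunctive reading of $\leftrightarrow$ and the axioms (A2), (A3) are doing the work. A secondary, mostly bookkeeping, subtlety is pinning down the meaning of the iterated expression $\varphi_1\,\&\,\cdots\,\&\,\varphi_n$: the induction above is cleanest for the left-associated grouping $((\cdots(\varphi_1\,\&\,\varphi_2)\,\&\,\cdots)\,\&\,\varphi_n)$, and if any other parenthesization is intended one must additionally invoke provable associativity of $\&$ (a consequence of (A2)--(A5)) to rebracket. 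Neither point is deep, but both are where a careless write-up would slip.
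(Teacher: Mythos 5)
Your proposal is correct and takes essentially the same route as the paper: both hinge on extracting the right-to-left direction of (A5) to obtain a fusion theorem of the form $\varphi \rightarrow (\psi \rightarrow (\varphi \,\&\, \psi))$ and then finish with two applications of modus ponens against the members of $\Gamma$. The only cosmetic differences are that the paper feeds the (A3) instance $(\varphi_2\,\&\,\varphi_1)\rightarrow(\varphi_1\,\&\,\varphi_2)$ into (A5)'s right-hand side where you use the identity $(\varphi\,\&\,\psi)\rightarrow(\varphi\,\&\,\psi)$, and the paper handles only $n=2$ ``without loss of generality'' where you spell out the induction and the associativity bookkeeping explicitly.
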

	\begin{proof}
		Without loss of generality, we show that if $\Gamma = \{\varphi_1, \varphi_2\}$, then  $\Gamma \vdash\varphi_1\,\&\,\varphi_2$.
		\begin{align*}
			&(1)&& \Gamma\vdash (\varphi_2\,\&\, \varphi_1)\rightarrow (\varphi_1\,\&\, \varphi_2)&& (A3)\\
			&(2)&& \Gamma\vdash \varphi_2\rightarrow(\varphi_1\rightarrow (\varphi_1\,\&\, \varphi_2)) && \text{(1),(A5), MP}\\
			&(3)&& \Gamma\vdash\varphi_2&& \text{assumption } \varphi_2 \in \Gamma \\
			&(4)&& \Gamma\vdash\varphi_1&& \text{assumption } \varphi_1 \in \Gamma \\
			&(5)&& \Gamma\vdash\varphi_1\rightarrow (\varphi_1\,\&\, \varphi_2)&& \text{(2),(3), MP}\\
			&(6)&&\Gamma\vdash\varphi_1\,\&\, \varphi_2 && \text{(4),(5), MP}
		\end{align*}
	\end{proof}
	\begin{lemma}\label{maximal}
		We have
		\begin{itemize}
			\item[(i)] Each consistent set $\Phi$ of formulae in \textgoth{L} can be extended to a maximal consistent set.
			\item[(ii)] If $\Phi$ is a maximal  consistent set, then for all formulae $\varphi$ and $\psi$:
			\begin{enumerate}
				\item $\varphi\,\&\,\psi \in \Phi$ if and only if $\varphi \in \Phi$ and $\psi\in \Phi$,
				\item If $\varphi \in \Phi$ and $\varphi \rightarrow \psi \in \Phi$, then $\psi \in \Phi$,
				\item If $\Phi\vdash_{ } \varphi$, then $\varphi \in \Phi$,
				\item $\exists n \in \mathbb{N}\; \underbrace{\varphi\,\&\,\cdots\,\&\,\varphi}_{\text{n times}} \in \Phi$ or $\neg \underbrace{(\varphi\,\&\,\cdots\,\&\,\varphi)}_{\text{n times}} \in \Phi$. 
			\end{enumerate}
		\end{itemize}
	\end{lemma}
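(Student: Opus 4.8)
The plan is to treat the two assertions separately, and inside (ii) to establish statement~(3) first, since (1) and (2) follow from it almost immediately and (4) rests on the same circle of ideas. Throughout I write $\varphi^{n}$ for the strong power $\underbrace{\varphi\,\&\,\cdots\,\&\,\varphi}_{n\text{ times}}$.

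For (i) I would run a Lindenbaum-style construction. As $\mathcal{P}$ is enumerable the set of all formulas of \textgoth{L} is countable, so fix an enumeration $\psi_1,\psi_2,\dots$ of it. Setting $\Phi_0=\Phi$, define inductively $\Phi_{m+1}=\Phi_m\cup\{\psi_{m+1}\}$ when this set is consistent and $\Phi_{m+1}=\Phi_m$ otherwise, and put $\Phi^{\ast}=\bigcup_{m}\Phi_m$. Consistency of $\Phi^{\ast}$ is immediate from the finite character of the definition of consistency: every finite subset of $\Phi^{\ast}$ already lies inside some $\Phi_m$, which is consistent by construction. For maximality, suppose $\psi\notin\Phi^{\ast}$; at the stage where $\psi=\psi_{m+1}$ was inspected the set $\Phi_m\cup\{\psi\}$ must have been inconsistent, and since $\Phi_m\cup\{\psi\}\subseteq\Phi^{\ast}\cup\{\psi\}$, Remark~\ref{lemma_consistent} gives that $\Phi^{\ast}\cup\{\psi\}$ is inconsistent as well.

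For statement~(3) I would argue by contradiction: assume $\Phi\vdash\varphi$ but $\varphi\notin\Phi$. Maximality forces $\Phi\cup\{\varphi\}$ to be inconsistent, so some finite $\{\chi_1,\dots,\chi_k\}\subseteq\Phi$ witnesses a refutable conjunction once $\varphi$ is adjoined. The idea is to absorb the derivation $\Phi\vdash\varphi$ into this witness and so produce a finite subset of $\Phi$ that is already inconsistent, contradicting the consistency of $\Phi$. Concretely, from the refutation I would pass through (A5) and (\L9) to an implication whose antecedent is the conjunction $\chi_1\,\&\,\cdots\,\&\,\chi_k$, and then splice the derivation of $\varphi$ into its succedent using the monotonicity of $\&$ recorded in (\L11) and (\L15) together with the transitivity (A1). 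Once (3) is in hand, (2) is merely modus ponens followed by (3); and (1) follows in one direction from (A2), (A3) and (3), and in the other direction from Lemma~\ref{and_Luka} and (3).

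Statement~(4) is where I expect the real work. Fix $\varphi$. If $\varphi\in\Phi$ we are finished with $n=1$; otherwise $\varphi\notin\Phi$ and maximality makes $\Phi\cup\{\varphi\}$ inconsistent. The delicate point --- and the precise place where \L ukasiewicz logic parts company with the classical case --- is that contraction $\varphi\rightarrow(\varphi\,\&\,\varphi)$ is not a theorem, so a derivation of a contradiction from $\Phi\cup\{\varphi\}$ may invoke $\varphi$ several times; collecting these occurrences into a single strong power $\varphi^{n}$ is exactly what the local deduction theorem for \textbf{BL} supplies. I would therefore extract from the inconsistency a number $n$ and a finite $\{\chi_1,\dots,\chi_k\}\subseteq\Phi$ with $\vdash\neg(\chi_1\,\&\,\cdots\,\&\,\chi_k\,\&\,\varphi^{n})$, rewrite this --- again by (A5) and (\L9) --- as $\vdash(\chi_1\,\&\,\cdots\,\&\,\chi_k)\rightarrow\neg(\varphi^{n})$, use (1) to place the conjunction $\chi_1\,\&\,\cdots\,\&\,\chi_k$ inside $\Phi$, and conclude $\neg(\varphi^{n})\in\Phi$ by modus ponens and (3). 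The main obstacle throughout is the careful bookkeeping of these powers: because repeated premises cannot be collapsed, one genuinely needs the exponent $n$, and this is also what explains the phenomenon highlighted in the abstract, namely that neither $\varphi$ nor $\neg\varphi$ individually need lie in $\Phi$.
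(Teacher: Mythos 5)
Your part (i) is the same Lindenbaum construction the paper uses, and your endgame for (ii)-4 is also the paper's: pass from the refutation of $\chi_1\,\&\,\cdots\,\&\,\chi_k\,\&\,(\varphi\,\&\,\cdots\,\&\,\varphi)$ to $\vdash(\chi_1\,\&\,\cdots\,\&\,\chi_k)\rightarrow\neg(\varphi\,\&\,\cdots\,\&\,\varphi)$ via (\L{}5)--(\L{}8) and push it into $\Phi$ using (1), (3) and (2). The genuine divergence is your ordering of part (ii): you want to prove (3) first, directly, and deduce (1) and (2) from it, whereas the paper proves (1) and (2) straight from the definition of consistency, establishes (3) only for theorems $\vdash\varphi$, and then gets the full statement (3) by induction on the length of a derivation from $\Phi$, with (2) supplying the modus ponens step.

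That reordering is where the gap lies. Your direct argument for (3) must convert ``$\Phi\vdash\varphi$ together with $\vdash(\chi_1\,\&\,\cdots\,\&\,\chi_k)\rightarrow\neg\varphi$'' into the inconsistency of a \emph{finite subset of} $\Phi$. The derivation of $\varphi$ may use its premises $\delta_1,\dots,\delta_j\in\Phi$ several times, so the local deduction theorem only gives $\vdash(\delta_1^{n_1}\,\&\,\cdots\,\&\,\delta_j^{n_j})\rightarrow\varphi$ with nontrivial exponents, and splicing this into the refutation refutes a conjunction in which the $\delta_i$ occur with multiplicities. Since $\delta\rightarrow\delta\,\&\,\delta$ is not a theorem of \L ukasiewicz logic, this does \emph{not} yield $\vdash\neg(\delta_1\,\&\,\cdots\,\&\,\delta_j\,\&\,\chi_1\,\&\,\cdots\,\&\,\chi_k)$, which is what the paper's definition of inconsistency of a finite set requires (each formula occurring exactly once). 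The failure is real, not just a presentational issue: $\{p\wedge\neg p\}$ is consistent in the paper's sense, yet it derives $\perp$ (apply Lemma \ref{and_Luka} with $\varphi_1=\varphi_2=p\wedge\neg p$ and note $\vdash\neg\bigl((p\wedge\neg p)\,\&\,(p\wedge\neg p)\bigr)$), so ``$\Phi$ proves a contradiction'' does not imply ``some finite subset of $\Phi$ is inconsistent.'' This is exactly the contraction phenomenon you correctly diagnose in part (4), but it undercuts your route to (3). The paper's order sidesteps it: in (2) the formulas $\varphi$ and $\varphi\rightarrow\psi$ each enter the refuted conjunction only once, so no contraction is needed, and (3) then follows by induction on the derivation. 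You should either adopt that order, or restrict your direct argument to the premise-free case $\vdash\varphi$ (which is contraction-free) and then induct on the proof length as the paper does.
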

	\begin{proof}
		\textbf{$(i)$:}	Let $\varphi_1, \varphi_2, \cdots$
		be an enumeration of all formulae in \textgoth{L}. We define a sequence of sets of formulae $\Phi =\Phi_1\subseteq \Phi_2 \subseteq \cdots \subseteq \Phi_i \subseteq \cdots$  as follows:
		\begin{equation*}
			\forall i~~~~\Phi_{i+1} = \left\lbrace \begin{array}{lc}
				\Phi_i & \text{If } \Phi_i\cup \{\varphi_i\} \text{ is not } \text{consistent},\\
				\Phi_i \cup \{\varphi_i\} & \text{otherwise}.
			\end{array} \right. 
		\end{equation*}
		Let $\Phi_\omega = \bigcup_{i\geq 1} \Phi_i$. It is easy to see that $\Phi_\omega$ is consistent since otherwise there is a finite subset $\Gamma\subset \Phi_\omega$ such that it is not consistent, and then for some $j \in \mathbb{N}$ we have $\Gamma \subseteq \Phi_j$, where by Remark \ref{lemma_consistent} $\Phi_j$ is inconsistent but $\Phi_j$ is a consistent set by the definition of construction, which is a contradiction. Therefore $\Phi_\omega$ is consistent. Now suppose that $\Phi_\omega$ is not maximal, so there exists a formula $\varphi=\varphi_j$ such that $\varphi \notin \Phi_\omega$ and the set $\Phi_\omega \cup\{\varphi\}$ is a consistent set. 
		If $\Phi_j\cup \{\varphi_j\}$ is not consistent,  then $\Phi_\omega \cup \{\varphi_j\}$ is not consistent by Remark \ref{lemma_consistent} which is a contradiction. Otherwise $\Phi_{j+1}$  contains $\varphi_j$, and so  $\Phi_\omega$ contains $\varphi_j$ which has a contraction to the assumption $\varphi \notin \Phi_\omega$.
		\vskip 0.3cm
		\textbf{(ii)-1:}  Let $\varphi \, \&\, \psi \in \Phi$ and $\varphi \notin \Phi$. Thus by definition $\Phi \cup \{\varphi\}$ is not consistent. So there is a finite set $\Gamma\subset \Phi$ such that $\Gamma \cup \{\varphi\}$ is not consistent. Let $\Gamma = \{\psi_1, \cdots, \psi_n\}$, hence by the inconsistency of $\Gamma \cup \{\varphi\}$ we have $\vdash_{} \neg (\psi_1 \,\&\, \cdots \,\&\, \psi_n \,\&\, \varphi)$. Since $\Gamma\cup \{\varphi, \psi\}$ is not consistent, then we  have $\vdash_{ } \neg (\psi_1 \,\&\, \cdots \,\&\, \psi_n \,\&\, \varphi \,\&\, \psi)$, and since $\Gamma \cup \{\varphi\,\&\,\psi\}\subset \Phi$, then using Remark \ref{lemma_consistent}, we obtain a contradiction to the  consistency of $\Phi$.\\
		
		For other direction assume $\varphi\in \Phi$, $\psi \in \Phi$ and $\varphi \,\&\, \psi \notin \Phi$. Then $\Phi \cup \{\varphi\,\&\, \psi\} $ is not consistent, and there is a finite set  $\Gamma = \{\psi_1, \cdots, \psi_n\}$, where $\Gamma\subset \Phi$ such that $\vdash_{ }\neg (\psi_1\,\&\, \cdots\,\& \,\psi_n\,\&\, \varphi \,\&\,\psi)$. But this states that the finite subset $\Gamma \cup \{\varphi,\psi\} \subset \Phi$ is not consistent. 
		\\
		\textbf{(ii)-2:}
		Let $\varphi \in \Phi$ and $\varphi \rightarrow \psi \in \Phi$. If $\psi \notin \Phi$, then $\Phi \cup \{\psi\}$ is not consistent and there exists a finite subset $\Gamma \subset \Phi$ such that $\Gamma \cup \{\psi\}$ is not consistent. Let $\Gamma = \{\psi_1, \cdots, \psi_n\}$, so we have:
		\begin{equation}\label{eq_inconsistent}
			\vdash_{ } \neg (\psi_1\,\&\, \cdots, \,\&\, \psi_n \,\&\, \psi).
		\end{equation} 
		On the other hand,
		by using Lemma \ref{and_Luka} from $\vdash_{ } \chi\rightarrow \chi$ and 
		(\L10) we have 
		$\vdash_{ }(\chi \rightarrow \chi) \,\&\, ((\varphi \,\&\, (\varphi \rightarrow \psi)) \rightarrow \psi)$
		and by considering an instance of (\L11) and applying MP we obtain
		$$\vdash_{ }\chi \,\&\, (\varphi \,\&\, (\varphi \rightarrow \psi)) \rightarrow (\chi \,\&\, \psi).$$
		Then by replacing $\psi_1\,\&\, \cdots \,\&\,\psi_n$ instead of $\chi$ we have:
		$$\vdash_{ }(\psi_1\,\&\,\cdots\,\&\, \psi_n \,\&\, \varphi \,\&\,(\varphi \rightarrow \psi)) \rightarrow (\psi_1 \,\&\, \cdots \,\&\, \psi_n \,\& \,\psi )$$
		Now, by using $(\varphi \rightarrow \psi)\rightarrow (\neg \psi \rightarrow \neg \varphi)$ scheme, which is provable in \L ukasiewicz logic we have:
		\begin{equation}\label{eq013} 
			\vdash_{ }\neg (\psi_1 \,\&\, \cdots \,\&\, \psi_n \& \psi ) \rightarrow \neg (\psi_1\,\&\,\cdots\,\&\, \psi_n \,\&\, \varphi \,\&\,(\varphi \rightarrow \psi)).
		\end{equation}
		By applying MP on \ref{eq_inconsistent}, \ref{eq013} we obtain $\vdash_{ } \neg (\psi_1\,\&\,\cdots\,\&\, \psi_n \,\&\, \varphi \,\&\,(\varphi \rightarrow \psi)) $. But this means that $\Gamma \cup \{\varphi, \varphi\rightarrow \psi\}\subset \Phi$ is inconsistent, which is a contradiction to the  -consistency of $\Phi$.\\
		\textbf{(ii)-3:}  We first show that if $\vdash \varphi$, then $\varphi \in \Phi$.
		Let $\vdash \varphi$. 
		First, note that we have $\vdash\neg  \varphi \leftrightarrow \perp$ since from the assumption we have $\vdash \neg\neg \varphi$ from (\L 8) and then from (\L 9) we obtain $\vdash \neg \varphi \rightarrow \perp$, and also from (A7) we have $\vdash \perp \rightarrow \neg \varphi$.
		
	Now, for the sake of contradiction assume that $\varphi \notin \Phi$. If $\varphi \notin \Phi$, then by maximality definition $\Phi\cup \{\varphi\}$ is inconsistent. So there is a subset $\Gamma = \{\psi_1,\cdots, \psi_n\} \subset \Phi$, such that $\Gamma\cup \{\varphi\}$ is not consistent. Therefore 
		$$\begin{array}{llll}
			(1) & & \vdash_{ }\neg (\psi_1\,\&\, \cdots \,\&\, \psi_n \,\&\, \varphi) & \text{inconsistency of } \Gamma \cup \{\varphi\}\\
			(2) & & \vdash_{ }\neg \psi_1 \veebar\cdots \veebar \neg\psi_n \veebar \neg \varphi & (1),(\L5), \text{MP} \\
			(3) & & \vdash_{ }\neg (\neg \psi_1 \veebar\cdots \veebar \neg\psi_n) \rightarrow \neg \varphi & (2), (\L7 ), \text{MP}\\
			(4) & & \vdash_{ }\neg (\neg \psi_1 \veebar\cdots \veebar \neg\psi_n) \rightarrow \perp & (3), \text{assumption and replacement, MP} \\
			(5) & & \vdash_{ }(\neg \neg \psi_1 \,\&\, \cdots \,\&\, \neg \neg \psi_n) \rightarrow \perp & (4), (\L6), \text{MP} \\
			(6)  & & \vdash_{ }(\psi_1 \,\&\, \cdots \,\&\, \psi_n) \rightarrow \perp & (5), (\L8), \text{replacement}, \text{MP}\\
			(7) & & \vdash_{ }\neg (\psi_1 \,\&\, \cdots \,\&\,\psi_n) & (6),(\L9), \text{MP}
		\end{array}$$
		Thus from $\vdash_{ } \neg (\psi_1 \,\&\, \cdots \,\&\,\psi_n)$, we have $\Gamma$ is not consistent. But this is a contradiction with the consistency of $\Phi$.
		
		And by induction on length of proof of $\varphi$ from $\Phi$, and the previous fact and the part (ii)-2, the desired statement can be obtained.
		
		\textbf{(ii)-4:} Suppose that there is a formula $\varphi$ in \textgoth{L} such that for every $n\in \mathbb{N}$  $\underbrace{\varphi\,\&\,\cdots\,\&\,\varphi}_{\text{n times}}\notin \Phi$ and $\neg \underbrace{(\varphi\,\&\,\cdots\,\&\,\varphi)}_{\text{n times}} \notin \Phi$. From the maximality and consistency of $\Phi$, for all $n\in\mathbb{N}$ neither  $\Phi \cup \{\underbrace{\varphi\,\&\,\cdots\,\&\,\varphi}_{\text{n times}}\}$ nor $\Phi \cup \{\neg \underbrace{(\varphi\,\&\,\cdots\,\&\,\varphi)}_{\text{n times}}\}$ is consistent.
		Assume  $\Gamma = \{\psi_1, \cdots, \psi_m\}$ is a finite subset of $\Phi$ which  $\Gamma \cup \{\underbrace{\varphi\,\&\,\cdots\,\&\,\varphi}_{\text{n times}}\}$ is not consistent,
		then the following deduction is valid:
		$$\begin{array}{llll}
			(1)& &\vdash_{ } \neg (\psi_1 \,\&\, \cdots \,\&\, \psi_m \,\&\,\underbrace{(\varphi\,\&\,\cdots\,\&\,\varphi)}_{\text{n times}}) & \text{inconsistency of } \Gamma\cup \{\underbrace{\varphi\,\&\,\cdots\,\&\,\varphi}_{\text{n times}}\}\\
			(2)& & \vdash_{ } \neg \psi_1 \veebar \cdots \veebar \neg \psi_m \veebar \neg \underbrace{(\varphi\,\&\,\cdots\,\&\,\varphi)}_{\text{n times}} & (1), (\text{\L} 5)\\ 
			(3)& &\vdash_{ } \neg (\neg \psi_1 \veebar \cdots \veebar \neg \psi_m ) \rightarrow \neg \underbrace{(\varphi\,\&\,\cdots\,\&\,\varphi)}_{\text{n times}} & (2), (\text{\L}7) \\
			(4)& &\vdash_{ } (\neg\neg \psi_1 \,\&\, \cdots \,\&\,\neg\neg\psi_m ) \rightarrow \neg \underbrace{(\varphi\,\&\,\cdots\,\&\,\varphi)}_{\text{n times}}  & (3), (\text{\L}6) \\
			(5)& & \vdash_{ } (\psi_1 \,\&\, \cdots \,\&\,\psi_m ) \rightarrow \neg \underbrace{(\varphi\,\&\,\cdots\,\&\,\varphi)}_{\text{n times}} & (4), (\text{\L}8), \text{replacement}.
		\end{array}$$
		Thus from (ii)-3 we have $(\psi_1 \,\&\, \cdots \,\&\, \psi_m )\rightarrow \neg \underbrace{(\varphi\,\&\,\cdots\,\&\,\varphi)}_{\text{n times}} \in \Phi$,
		since $\Gamma=\{\psi_1, \cdots, \psi_m\}\subset \Phi$ then we have $(\psi _1 \,\&\, \cdots \,\&\, \psi_m) \in \Phi$, by (ii)-1. So from (ii)-2 we have $\neg \underbrace{(\varphi\,\&\,\cdots\,\&\,\varphi)}_{\text{n times}} \in \Phi$, which is a  contradiction with the assumption  $\neg \underbrace{(\varphi\,\&\,\cdots\,\&\,\varphi)}_{\text{n times}} \notin \Phi$.
		
%
	\end{proof}
		\begin{remark}
		Unlike  classical maximal consistent sets, here the maximal sets do not necessarily include all formulae or their negations. For example consider the set $\{(p\,\&\,p), \neg (\neg p \,\&\, \neg p)\}\subseteq \Phi$. This maximal consistent set  does not include neither $p$ nor $\neg p$. However, note that $(p\,\&\,p)\in \Phi$ which justifies Lemma \ref{maximal} part ii-4  for $n=2$.
	\end{remark}

		\begin{theorem}\label{consistent_e}
			Let $\Phi$ be a consistent set of formulae and $\varphi$ be a formula such that 
			$\Phi \nvdash \varphi$. If $\Phi^{*} = \Phi\cup \{\neg \varphi\}$, then $\Phi^{*}$ is consistent.
		\end{theorem}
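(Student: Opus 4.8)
The plan is to argue by contraposition: I will assume that $\Phi^{*}=\Phi\cup\{\neg\varphi\}$ is \emph{inconsistent} and derive $\Phi\vdash\varphi$, which contradicts the hypothesis $\Phi\nvdash\varphi$. By the definition of consistency for infinite sets, the inconsistency of $\Phi^{*}$ yields a finite subset of $\Phi^{*}$ that is inconsistent. Since $\Phi$ itself is consistent, no finite subset drawn entirely from $\Phi$ can be inconsistent (this is exactly Remark \ref{lemma_consistent}), so the offending finite subset must contain the new formula $\neg\varphi$. Writing its remaining members as $\psi_1,\dots,\psi_n\in\Phi$ and abbreviating $\alpha=\psi_1\,\&\,\cdots\,\&\,\psi_n$, the inconsistency translates, after rearranging the conjunction by commutativity and associativity of $\&$ so that $\neg\varphi$ is the last conjunct, into $\vdash\neg(\alpha\,\&\,\neg\varphi)$.

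The key step is to convert this negated-conjunction theorem into the implication $\vdash\alpha\rightarrow\varphi$. I would carry this out exactly in the style of the manipulations already used in Lemma \ref{maximal}: apply (\L5) to $\vdash\neg(\alpha\,\&\,\neg\varphi)$ to obtain $\vdash\neg\alpha\veebar\neg\neg\varphi$; use (\L8) together with replacement to get $\vdash\neg\alpha\veebar\varphi$; rewrite this via (\L7) as $\vdash\neg\neg\alpha\rightarrow\varphi$; and finally apply (\L8) and replacement once more to arrive at $\vdash\alpha\rightarrow\varphi$, that is, $\vdash(\psi_1\,\&\,\cdots\,\&\,\psi_n)\rightarrow\varphi$. (The degenerate case $n=0$, where the inconsistent subset is $\{\neg\varphi\}$, is immediate, since then $\vdash\neg\neg\varphi$ gives $\vdash\varphi$ by (\L8) directly.)

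To finish, I observe that since $\psi_1,\dots,\psi_n\in\Phi$, Lemma \ref{and_Luka} gives $\Phi\vdash\psi_1\,\&\,\cdots\,\&\,\psi_n$. Combining this with the theorem $\vdash(\psi_1\,\&\,\cdots\,\&\,\psi_n)\rightarrow\varphi$ by modus ponens yields $\Phi\vdash\varphi$, the desired contradiction. Hence $\Phi^{*}$ must be consistent.

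I expect the main obstacle to be the bookkeeping in the middle paragraph: one must check that the chain (\L5)--(\L8)--(\L7)--(\L8) is applied to the correct subformulae and that each replacement step is legitimate, and one must justify that the inconsistent conjunction can indeed be rearranged so that $\neg\varphi$ occupies the final position. All of this is routine given the theorems collected above, but it is precisely where a slip would most easily occur.
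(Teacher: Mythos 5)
Your proposal is correct and follows essentially the same route as the paper: extract a finite inconsistent subset $\Gamma\cup\{\neg\varphi\}$ with $\Gamma\subseteq\Phi$, convert $\vdash\neg(\psi_1\,\&\,\cdots\,\&\,\psi_n\,\&\,\neg\varphi)$ into $\vdash(\psi_1\,\&\,\cdots\,\&\,\psi_n)\rightarrow\varphi$ via (\L5), (\L7), (\L8) and replacement, and then combine with Lemma \ref{and_Luka} and modus ponens to get $\Phi\vdash\varphi$, contradicting the hypothesis. The only differences are cosmetic (you keep the conjunction $\alpha$ as a block rather than splitting the disjunction termwise, and you explicitly note the $n=0$ case and the reordering of conjuncts, which the paper leaves implicit).
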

		\begin{proof}
			For the sake of contradiction suppose that $\Phi^{*}$ is not consistent. So there exists a finite subset $\Gamma = \{\varphi_1, \cdots, \varphi_n\}\subseteq\Phi$ such that $\Gamma \cup \{\neg \varphi\}$ is not consistent, then we have:
			\begin{align}
				\nonumber	& \vdash \neg (\varphi_1\,\&\, \cdots \,\&\, \varphi_n \,\&\, \neg \varphi)  &  \\
				\nonumber		& \vdash \neg \varphi_1 \veebar \cdots \veebar \neg \varphi_n \veebar \neg \neg \varphi & \\
				\nonumber		& \vdash \neg (\neg \varphi_1 \veebar \cdots \veebar \neg \varphi_n) \rightarrow \varphi & \\
				\label{eq_008}		& \vdash (\varphi_1 \,\&\, \cdots \,\&\, \varphi_n) \rightarrow \varphi &
			\end{align}
			Also, by Lemma \ref{and_Luka} part 2-i, from $\Gamma\subset \Phi$ we have $\Phi \vdash_{ } (\varphi_1 \,\&\, \cdots \,\&\, \varphi_n) $.
			Thus from \ref{eq_008}, we have $\Phi \vdash_{ } \varphi$,
			which is a contradiction to $\Phi \nvdash_{ } \varphi$.
			
		\end{proof}
		
		\begin{theorem} \label{model_e}
			Let $\Phi$ be a consistent set and $\Phi \vdash_{ } \varphi$, where $\varphi$ is a formula. Then there is a valuation $V$  such that $V(\varphi) = 1$. 
		\end{theorem}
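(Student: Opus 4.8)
The plan is to follow the familiar route of manufacturing a model from a maximal consistent set, but with the unit interval $[0,1]$ playing the role that $\{0,1\}$ plays in the classical completeness argument. First I would apply Lemma~\ref{maximal}(i) to extend $\Phi$ to a maximal consistent set $\Phi'$. Since $\Phi \subseteq \Phi'$, monotonicity of $\vdash$ together with the hypothesis $\Phi \vdash \varphi$ gives $\Phi' \vdash \varphi$, whence $\varphi \in \Phi'$ by Lemma~\ref{maximal}(ii)-3. Thus the statement reduces to producing a single valuation $V$, read off from $\Phi'$, under which the distinguished formula $\varphi$ receives value $1$; the entire difficulty is concentrated in constructing this canonical valuation and proving it well defined.

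The guiding principle is that a maximal consistent set ought to determine a value-preserving map from the formula algebra into $[0,1]$, and Lemma~\ref{maximal}(ii)-4 is precisely the Archimedean ingredient that lets one extract a real number. Concretely, I would set $\pi(\perp)=0$ and, for each propositional variable $p$, define $\pi(p)$ from the combinatorial pattern recording, for every $n$, whether $\underbrace{p\,\&\,\cdots\,\&\,p}_{n}\in\Phi'$ or $\neg(\underbrace{p\,\&\,\cdots\,\&\,p}_{n})\in\Phi'$. Part (ii)-4 guarantees that for each $n$ at least one alternative holds, and the point at which the iterated conjunctions leave $\Phi'$ should pin $\pi(p)$ down as a supremum of rationals of the form $1-\tfrac{1}{n}$, using the dual multiples $\neg((\neg p)^{n})$ to squeeze from the other side. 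One then extends $\pi$ to a valuation $V$ on all formulae through the recursive semantic clauses.

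It would then remain to prove a truth lemma by induction on the structure of $\psi$, matching the value $V(\psi)$ against the membership data of $\psi$ and its iterates in $\Phi'$: the cases for $\&$ and for closure under modus ponens are supplied by Lemma~\ref{maximal}(ii)-1 and (ii)-2, and negation by the clause $V(\neg\psi)=1-V(\psi)$. Feeding $\varphi\in\Phi'$ through this lemma would yield $V(\varphi)=1$. I expect the main obstacle to be exactly this construction-plus-truth-lemma core: converting the purely syntactic dichotomy of (ii)-4 into a genuine $[0,1]$-valued homomorphism is, in effect, an explicit order-preserving (Archimedean, H\"older-style) embedding built by hand, and the implication connective, whose semantics $V(\psi\to\chi)=\min\{1,1-V(\psi)+V(\chi)\}$ combines two values nonlinearly, is the delicate inductive step. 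A further point demanding care, as flagged in the preceding remark, is that membership in $\Phi'$ is not governed by the classical ``contains $\psi$ or $\neg\psi$'' dichotomy, so the precise membership-to-value correspondence underlying the truth lemma must be identified directly rather than transplanted from the two-valued template.
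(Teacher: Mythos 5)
Your overall skeleton --- extend $\Phi$ to a maximal consistent $\Phi'$, conclude $\varphi\in\Phi'$ via Lemma \ref{maximal}(ii)-3, then read a canonical valuation off $\Phi'$ and prove a truth lemma by induction --- matches the paper's. But at the decisive step the two part ways, and in your version that step is exactly what is left undone. The paper does not attempt any Archimedean extraction: it simply sets $V(p)=1$, $0$, or $0.5$ according as $p\in\Phi^*$, $\neg p\in\Phi^*$, or neither, and then carries out a three-valued truth lemma, so its countermodel lives entirely in $\{0,0.5,1\}$. You instead aim at a genuinely $[0,1]$-valued construction, which is the classically ``right'' direction for infinite-valued \L ukasiewicz logic, but everything you flag as ``the main obstacle'' --- converting (ii)-4 into a value in $[0,1]$ and handling $\rightarrow$ --- is the entire content of the theorem, and you give no argument for it.

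Moreover, the one concrete recipe you do give cannot work as stated. If $V(p)=v$, the $n$-fold conjunction $p\,\&\,\cdots\,\&\,p$ has value $\max\{0,\,nv-(n-1)\}$, so $\neg(p\,\&\,\cdots\,\&\,p)$ is fully true exactly when $v\le\frac{n-1}{n}$, and dually $\neg(\neg p\,\&\,\cdots\,\&\,\neg p)$ exactly when $v\ge\frac{1}{n}$. These thresholds accumulate only at $0$ and $1$, so the membership pattern of powers of $p$ and $\neg p$ pins $v$ down only to a crude interval such as $\left(\frac{1}{n+1},\frac{1}{n}\right)$ and cannot distinguish, say, $v=0.26$ from $v=0.3$; the advertised ``supremum of rationals of the form $1-\frac{1}{n}$'' is therefore not a well-defined value for $\pi(p)$. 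Worse, the truth lemma requires the values assigned to \emph{different} variables to be coordinated: whether $p\rightarrow q\in\Phi'$ must come out as $\pi(p)\le\pi(q)$, and this is not determined by the separate power-patterns of $p$ and $q$. Even after choosing some point in each admissible interval, the inductive steps for $\rightarrow$ and for $\&$ across distinct variables would fail without a much stronger invariant tying $\Phi'$-membership of arbitrary compound formulas to linear inequalities among the $\pi(p)$'s. That coordination is the missing idea; naming it ``the delicate inductive step'' does not supply it.
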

		\begin{proof}
				First, we define a valuation $V$, then it is enough to show that for an arbitrary maximal   $\Phi^*$ contains $\Phi$, for each $\varphi'\in \Phi^*$,  we have the following:
				\begin{equation} \label{eq014}
					V(\varphi') = \left \lbrace\begin{array}{lll}
						1 & \iff & \varphi' \in \Phi^* \\
						0 & \iff & \neg \varphi' \in \Phi^* \\ 
						0.5 & \iff & \text{otherwise  }  \\
					\end{array} \right.
				\end{equation}
			It is easy to check the case $\varphi'=\varphi$,  the theorem statement holds.
			The valuation $V$ for each proposition $p$  is defined as follows:
				\begin{align*}
					& V( p) = \left\lbrace \begin{array}{lc}
						1 & p\in \Phi^* \\
						0 & \neg p \in \Phi^* \\
						0.5 & \text{otherwise } \
					\end{array} \right.; \qquad p\in \mathcal{P}.
				\end{align*}
				We prove statement \ref{eq014} by induction on the complexity of $\varphi'$.
				It is not hard to check that the base step $\varphi'=p$ holds by the definition of $V$.\\
				 For the induction step, we have the following cases:
				\\
				\textbf{case 1:}  $\varphi' = \neg \psi$. If  $\neg \psi \in \Phi^*$, then by consistency of $\Phi^*$ we have $\psi \notin \Phi^*$.
				So by induction hypothesis on $\psi$ we have $V(\psi)=0$. Thus $V(\neg \psi)=1$.
				
				Now assume $\neg \psi \notin \Phi^*$. If $\psi \notin \Phi^*$, then by induction on $\psi$ we have $V(\psi) = 0.5$ and so $V(\neg \psi) = 0.5$. If $\psi \in \Phi^*$, by induction on $\psi$ we have $V(\psi) = 1$ and so $V(\neg \psi)=0$ by definition.
				
				For other direction if $V(\neg \psi) = 1$, then by definition we have $V(\psi) = 0$. So by induction hypothesis on $\psi$ we have $\neg \psi\in \Phi^*$. 

				\textbf{case 2:} $\varphi' = \psi \,\&\, \chi$. If we have $\psi \,\&\, \chi \in \Phi^*$ then
				$$\psi \,\&\, \chi \in \Phi^* \stackrel{\text{Lemma } \ref{maximal}}{\iff} \psi \in \Phi^*, \chi \in \Phi^*\stackrel{\text{induction}}{\iff} V (\psi)=1= V(\chi) \Leftrightarrow V(\psi \,\&\, \chi)=1. $$
				Similar argument can be applied when $\psi\,\&\,\chi \notin \Phi^*$.\\
				\textbf{case 3:} $\varphi' = \psi \rightarrow \chi$.
				First assume $V(\psi \rightarrow \chi) = 1$. So we have $V(\psi) \leq V(\chi)$. It is not hard to check that by definition all values belongs to the set $\{0,0.5,1\}$ and so there are six possible cases in which $V(\psi) \leq V(\chi)$. If $V(\chi)=1$, then by induction we have $\chi \in \Phi^*$ and using (\L1) and MP we obtain $\psi\rightarrow\chi\in \Phi^*$. If $V(\psi)=0$, then by induction we have $\neg \psi \in \Phi^*$ and using (\L1) as $\neg \psi \rightarrow (\neg \chi \rightarrow \neg \psi)$ and MP we obtain $\neg \chi \rightarrow \neg \psi$, then by (\L3) and MP we obtain $\psi\rightarrow \chi\in \Phi^*$. If $V(\psi) = V(\chi) = 0.5$, then by induction hypothesis we have $\psi,\neg \chi \notin \Phi^*$. By Lemma \ref{maximal}, $\psi\,\&\,\neg \chi \notin \Phi^*$ and so we can easily obtain $\neg (\neg \psi \veebar \neg\neg \chi) \notin \Phi^*$. Using Theorem \ref{consistent_e} the set $\Phi^*\cup \{\neg\neg (\neg \psi \veebar \neg\neg \chi)\}$ is consistent. Using $(\L8)$ we can conclude that $\Phi^*\cup \{\neg \psi \veebar \neg\neg \chi\}$ is consistent. From (\L7) and replacement $\Phi^*\cup \{\psi\rightarrow \chi\}$ is consistent and since $\Phi^*$ is maximal we can conclude that $\psi\rightarrow\chi \in \Phi^*$.

				For the other direction assume $\psi \rightarrow \chi \in \Phi^*$. The desired statement can be concluded easily when $\chi \in \Phi^*$ by induction hypothesis.
				So assume $\chi \notin \Phi^*$. Note that if $\psi \in \Phi^*$ by Lemma \ref{maximal}-ii-2 we have $\chi \in \Phi^*$ which is contradiction with assumption. If $\neg \psi\in \Phi^*$ by induction hypothesis on $\psi$ we have $V(\psi)=0$ and so $V(\psi\rightarrow \chi)=1$. If $\neg \psi\notin \Phi^*$, then by induction hypothesis on $\psi$ and $\chi$, if $\neg \chi \notin \Phi^*$, we obtain $V(\psi\rightarrow \chi)=1$. Note that it is not possible that $\neg \chi \in \Phi^*$ since otherwise by using Lemma \ref{maximal}, and (\L 3) as $(\psi\rightarrow\chi) \rightarrow (\neg \chi\rightarrow \neg \psi)$ and applying MP two times we obtain $\neg \psi\in \Phi^*$ which contradicts with our assumption.
				
				If $\psi\rightarrow\chi \notin \Phi^*$, then $\Phi^*\cup \{\psi\rightarrow\chi\}$ is inconsistent. So there is a finite subset $\Gamma = \{\psi_1, \cdots, \psi_n\}$ of $\Phi^*$ such that $\Gamma \cup \{\psi\rightarrow\chi\}$ is inconsistent. Thus by following argument 				
				$$\begin{array}{llll}
					(1)& &\vdash_{ } \neg (\psi_1 \,\&\, \cdots \,\&\, \psi_m \,\&\,(\psi\rightarrow\chi)) & \text{inconsistency of } \Gamma\cup \{(\psi\rightarrow\chi)\}\\
					(2)& & \vdash_{ } \neg \psi_1 \veebar \cdots \veebar \neg \psi_m \veebar \neg (\psi\rightarrow\chi) & (1), (\text{\L} 5)\\ 
					(3)& &\vdash_{ } \neg (\neg \psi_1 \veebar \cdots \veebar \neg \psi_m ) \rightarrow \neg (\psi\rightarrow\chi) & (2), (\text{\L}7) \\
					(4)& &\vdash_{ } (\neg\neg \psi_1 \,\&\, \cdots \,\&\,\neg\neg\psi_m ) \rightarrow \neg (\psi\rightarrow\chi) & (3), (\text{\L}6) \\
					(5)& & \vdash_{ } (\psi_1 \,\&\, \cdots \,\&\,\psi_m ) \rightarrow \neg (\psi\rightarrow\chi)& (4), (\text{\L}8), \text{replacement}.
				\end{array}$$
				and using Remark \ref{and_Luka} and Lemma \ref{maximal}-ii-2 we have $\neg (\psi\rightarrow\chi)\in \Phi^*$.
				Applying (\L 6) and (\L 7) by Lemma \ref{maximal}-ii-2 we obtain $\psi \,\&\,\neg \chi\in \Phi^*$ and so $\psi,\neg \chi \in \Phi^*$. Therefore by induction hypothesis we have $V(\psi) = 1$ and $V(\chi)=0$ which means $V(\psi\rightarrow\chi)=0$ as desired. 
				\\
			\end{proof}
%
		
			\begin{theorem} \label{thm00} \textbf{(Completeness)}
				If $ \vDash\varphi$, then $\vdash_{ } \varphi$.
			\end{theorem}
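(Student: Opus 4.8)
The plan is to argue by contraposition: assuming $\nvdash \varphi$, I will produce a single valuation $V$ with $V(\varphi) \neq 1$, which immediately contradicts the hypothesis $\vDash \varphi$ (the latter demands $V(\varphi)=1$ for \emph{every} valuation). Observe that this direction requires no appeal to soundness, since one falsifying valuation already defeats $\vDash \varphi$; all the semantic machinery I need is the model-existence argument packaged in Theorem \ref{model_e}.

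The first step is to verify that the singleton set $\{\neg \varphi\}$ is consistent. If it were not, then by the definition of consistency we would have $\vdash \neg\neg\varphi$, and applying (\L 8) together with MP would yield $\vdash \varphi$, contradicting the standing assumption $\nvdash \varphi$. Hence $\{\neg \varphi\}$ is consistent, and trivially $\{\neg \varphi\} \vdash \neg \varphi$ since $\neg \varphi$ is one of its members.

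The second step feeds this into Theorem \ref{model_e}, taking the consistent set to be $\{\neg \varphi\}$ and the derived formula to be $\neg \varphi$ itself. The theorem then supplies a valuation $V$ --- built from a maximal consistent extension $\Phi^{*}$ of $\{\neg \varphi\}$ via the truth condition \ref{eq014} --- with $V(\neg \varphi) = 1$. By the semantic clause $V(\neg \varphi) = 1 - V(\varphi)$ this forces $V(\varphi) = 0$, so $V$ witnesses $\not\vDash \varphi$, contradicting $\vDash \varphi$. Consequently $\vdash \varphi$, which is the claim.

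Because Theorems \ref{consistent_e} and \ref{model_e} already carry the full model-existence (truth lemma) argument, the residual work here is light, and I expect the only delicate points to be the following two. First, the consistency of $\{\neg \varphi\}$ genuinely rests on the double-negation law (\L 8); without it one could not pass from the non-derivability of $\varphi$ to the consistency of its negation. Second, I must be sure the valuation handed back by Theorem \ref{model_e} assigns $V(\neg \varphi)=1$ rather than the indeterminate value $0.5$ that clause \ref{eq014} permits in general; this is guaranteed precisely because $\neg \varphi$ actually belongs to the maximal extension $\Phi^{*}$, so the first case of \ref{eq014} applies and pins the value to $1$. This is exactly the spot where the non-classical phenomenon (formulas legitimately receiving value $0.5$) could otherwise interfere, so it is the step I would state most carefully.
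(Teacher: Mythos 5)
Your proposal is correct and takes essentially the same route as the paper: show $\{\neg\varphi\}$ is consistent, feed it into Theorem \ref{model_e} to obtain a valuation with $V(\neg\varphi)=1$ (hence $V(\varphi)=0$), and contradict $\vDash\varphi$. The only cosmetic difference is that you justify the consistency of $\{\neg\varphi\}$ directly from the definition together with (\L 8) and MP, whereas the paper cites Theorem \ref{consistent_e} applied to the empty set; both are fine, and your explicit check that $\neg\varphi\in\Phi^{*}$ forces the value $1$ rather than $0.5$ is exactly the right point to flag.
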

			\begin{proof}
				For the sake of contradiction assume we have $\nvdash_{ } \varphi$. Thus by Theorem \ref{consistent_e}, $ \{\neg \varphi\}$ is a consistent set, and by Lemma \ref{maximal} we have a maximal and consistent set $\Phi^*$ which contains $\{\neg \varphi\}$. Therefore there exists valuation function $V$ such that $V(\neg \varphi)=1$ by Theorem \ref{model_e}. But this is a contradiction to $\vDash \varphi$. 
			\end{proof}


\begin{thebibliography}{0}
				\bibitem{Botur2015}
				M. Botur, J. Paseka,  \emph{Another proof of the completeness of the \L ukasiewicz axioms and of the extensions of Di Nola’s Theorem.} Algebra Univers. 73, 277–290 (2015). \url{https://doi.org/10.1007/s00012-015-0329-0}
				
				\bibitem{change1959}
				C.C. Chang,  \emph{A new proof of the completeness of the  \L ukasiewicz axioms.} Trans. Amer. Math. Soc. 93, 74–80 (1959)
				
				\bibitem{Hajek1998}
				P. H\"{a}jek, {\em Metamathematics of Fuzzy Logic} (Springer Netherlands, 1998).
				
				\bibitem{Klement2000}
				E.P Klement, R. Mesiar, E. Pap,   \emph{Many-valued logics}. In: Triangular Norms. Trends in Logic, vol 8. Springer, Dordrecht.(2000). \url{https://doi.org/10.1007/978-94-015-9540-7_11}
				
				\bibitem{Rose-Rosser1958}
				 A. Rose, J. B. Rosser,  \emph{Fragments of Many-Valued Statement Calculi}. Transactions of the American Mathematical Society, 87(1), 1–53. \url{https://doi.org/10.2307/1993083} (1958)
				 
				 \bibitem{Urquhart2001}
				 A. Urquhart,  \emph{Basic Many-Valued Logic}. In: Gabbay, D.M., Guenthner, F. (eds) Handbook of Philosophical Logic. Handbook of Philosophical Logic, vol 2. Springer, Dordrecht.  (2001). \url{https://doi.org/10.1007/978-94-017-0452-6_4}
				
				
			\end{thebibliography}
		\end{document}